\theoremstyle{plain}
\newtheorem{thm}{Theorem}[section]
\newtheorem{lem}[thm]{Lemma}
\theoremstyle{definition}
\newtheorem{defn}{Definition}[section]
\newtheorem{obsvn}{Observation}[section]
\theoremstyle{remark} \tolerance=10000 \hbadness=10000
\def \ni{\noindent}
\title{\bf Sudoku Number of Corona of Graphs}
\author{Manju S Nair \footnote{E-mail : manjunsnair@gmail.com}\\
	Department of General Sciences\\
	Govt. Women's Polytechnic College\\
	Ernakulam, Kerala, India. \and Aparna Lakshmanan S.\footnote{E-mail : aparnals@cusat.ac.in, aparnaren@gmail.com}\\
	Department of Mathematics\\
	Cochin University of Science and Technology\\
	Cochin - 22, Kerala, India. \and S Arumugam \footnote{E-mail : s.arumugam.klu@gmail.com}\\
	Department of Mathematics\\
	M S University\\
	Tirunelveli, Tamilnadu,India.
}
\date{}
\begin{document}
\maketitle
\begin{abstract}
\ni\line(1,0){360}\\
Let $ G = (V,E) $ be a graph of order $n$ with chromatic number $ \chi(G) = k$ and let $ S \subset V $ and let $ C_0 $ be a $k$-coloring of the induced subgraph $ G[S] $. The coloring $C_0$ is called an extendable coloring, if $C_0$ can be extended to a $k$-coloring of $G$ and it is a Sudoku coloring of $G$ if the extension is unique. The smallest order of such an induced subgraph $G[S]$ of $G$ which admits a Sudoku coloring is called the Sudoku number of $G$ and is denoted by $sn(G)$. In this paper, we first introduce the notion of uniquely color extendable vertex and then we obtain the lower and upper bounds for the Sudoku number of $G \circ H$ where $H$ is any graph of order $m \leq \chi(G)-2$. Some families of graphs which attain these bounds are also obtained. The exact value of the Sudoku number of $C_n \circ K_1$, $W_n \circ K_n$ for $n=1,2$, $K_n \circ K_m$ and $C_n \circ P_m$ for every $n,m \in N$ are also obtained.
\ni\line(1,0){360}\\
\ni {\bf Keywords:}  Sudoku Number, Corona product\\

\ni {\bf AMS Subject Classification:} {\bf Primary: 05C15, Secondary: 05C76}\\
\ni\line(1,0){360}\\
\end{abstract}

\section{Introduction}
By a graph $ G = (V,E) $, we mean a finite undirected connected simple graph of order $ n(G) = |V| $ and size $m(G) = |E| $ (if there is no ambiguity in the choice of $ G $, then we write it as $n$ and $m$, respectively). A vertex coloring of $ G $ is a map $ f:V \rightarrow C $, where $C$ is a set of distinct colors. It is proper if adjacent vertices of $G$ receive distinct colors of $C$, that is, if $ uv \in E(G) $, then $ f(u) \neq f(v) $. The minimum number of colors needed for a proper vertex coloring of $G$ is called the chromatic number $ \chi(G) $, of a graph $G$.\\

Sudoku coloring is a concept which is motivated by the popular Sudoku puzzle. Let $V$ denote the 81 cells consisting of 9 rows, 9 columns and nine 3 x 3 subsquares. Let $G$ be the graph with vertex set $V$ in which each of the rows, columns and the nine 3 x 3 subsquares are complete graphs. Clearly $ \chi(G) = 9 $ and the solution of a Sudoku puzzle gives a proper vertex coloring of $G$ with 9 colors. A Sudoku puzzle corresponds to a proper vertex coloring $C$ of an induced subgraph $H$ of $G$ using atmost 9 colors with the property that $C$ can be uniquely extended to a 9-coloring of $G$. This motivates the definition of Sudoku coloring \cite{maria2023sudoku}. \\

Let $ \chi(G) = k $ and $S \subset V$. Let $ C_0 $ be a $k$-coloring of the induced subgraph $ G[S] $. The coloring $C_0$ is called an extendable coloring, if $C_0$ can be extended to a $k$-coloring of $G$. $C_0$ is a Sudoku coloring of $G$, if this extension is unique. The smallest order of such an induced subgraph $G[S]$ of $G$ which admits a Sudoku coloring is called the Sudoku number of $G$ and is denoted by $sn(G)$\cite{maria2023sudoku}.\\

The corona product of two graphs $G$ and $H$, denoted by $G \circ H$, is defined as the graph obtained by taking one copy of $G$ and $|V(G)| $ copies of $H$ and joining the $i^{\text{th}}$ vertex of $G$ to every vertex in the $i^{\text{th}}$ copy of $H$. \\

Gee-Choon Lau et al., introduced the concept of Sudoku number in \cite{maria2023sudoku}. They showed that this parameter is related to the list coloring of graphs and they obtained necessary conditions for $C_0$ being a Sudoku coloring. They have also determined the Sudoku number of various families of graphs.
Alexey Pokrovskiey, proved that a connected graph has Sudoku number $n-1$, if and only if, it is complete \cite{pokrovskiy2022graphs}. The same concept has been considered by Cooper and Kirkpatrick \cite{cooper2014critical} and J A Bate and G H J Van Rees \cite{bate1999size} using critical sets. Also it has been independently defined by E S Mahmoodian et al. \cite{mahmoodian1997defining} under the name defining sets. But, their proof techniques and approach to the concept were different.\\

In this paper, we first introduce the notion of uniquely color extendable vertex. Then we obtain the lower and upper bounds for the Sudoku number of $G \circ H$ where $H$ is any graph of order $m \leq \chi(G)-2$ and some families of graphs which attain the bounds. Also, we determine the Sudoku number of $C_n \circ K_1$, $W_n \circ K_n$ for $n=1,2$, $K_n \circ K_m$ and $C_n \circ P_m$ for every $n,m \in N$.\\

For notations and concepts not mentioned here, we refer to \cite{balakrishnan2012textbook}.\\

\begin{defn}
Let $C_0$ be an extendable coloring of $G[S]$ for $S \subset V(G)$. A vertex $u \in V - S$ is called a uniquely color extendable (u.c.e) vertex if the color that can be given to $u$ in any proper extension of $C_0$ is unique.
\end{defn}

\begin{obsvn} An extendable coloring of $G[S]$ for $S \subset V(G)$ is a Sudoku coloring, if every vertex in $V - S$ is iteratively a u.c.e vertex. Also, if a vertex $w$ is a u.c.e. vertex then one of the following holds.
\begin{itemize}
  \item[1)] $w$ is a color dominating vertex (ie; $N(w)$ contains all colors of $C_0$, except one color)
  \item[2)] $N(w)$ has $k$ u.c.e. vertices $\{v_1, v_2, \dots, v_k\}$ such that $v_i$ attracts color $i$ for every $i$ and $N(w) \cap V_j \neq \phi$ for exactly $\chi(G)- k - 1$ values of $j \in \{k+1, k+2, \dots, \chi(G)\}$.
  \item[3)] $N(w)$ has an induced subgraph of chromatic number $\chi(G) – 1$ and there exists an $i \in C$ such that the color list of each vertex of this subgraph is a subset of $C – \{i\}$.
\end{itemize}
\end{obsvn}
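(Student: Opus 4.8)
The plan is to treat the two assertions separately, since the first is a direct consequence of iterating the definition of a u.c.e. vertex, while the second requires pinning down exactly why a single colour is forced at $w$. Throughout, for a vertex $v$ write $L(v)$ for the set of colours that $v$ can receive in \emph{some} proper extension of $C_0$; thus $v$ is u.c.e. precisely when $|L(v)|=1$, and if $L(v)=\{i\}$ we say $v$ is forced to, or attracts, colour $i$.

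For the first assertion I would argue by induction along the iterative order. Suppose $V-S=\{u_1,\dots,u_r\}$ is listed so that $u_1$ is u.c.e. with respect to $C_0$, and for each $t\ge 2$ the vertex $u_t$ is u.c.e. with respect to the coloring $C_{t-1}$ obtained from $C_0$ by assigning each of $u_1,\dots,u_{t-1}$ its forced colour. Since $u_1$ is u.c.e. with respect to $C_0$, every extension of $C_0$ gives $u_1$ the same colour, so every extension of $C_0$ is in fact an extension of $C_1$. Applying the u.c.e. property of $u_2$ with respect to $C_1$, then of $u_3$ with respect to $C_2$, and so on, every extension of $C_0$ must agree with the coloring $C_r$ that we have built on all of $u_1,\dots,u_r$. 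Hence the extension is unique and $C_0$ is a Sudoku coloring.

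For the second assertion the key preliminary step is a recolouring observation: if $w$ is forced to colour $i$, then for every colour $j\ne i$ and every proper extension $\phi$ of $C_0$, some neighbour of $w$ carries colour $j$. Indeed, were there an extension in which no neighbour of $w$ used $j$, then — as $w\in V-S$ — we could recolour $w$ to $j$ and still have a proper extension of $C_0$, contradicting $L(w)=\{i\}$. Thus $w$ being forced to $i$ is \emph{equivalent} to each of the $\chi(G)-1$ colours different from $i$ being permanently present in $N(w)$. I would then classify, colour by colour, how this permanent presence is guaranteed: a colour may already sit on a vertex of $N(w)\cap S$ (a genuinely coloured neighbour), it may be supplied by a neighbour that is itself u.c.e. and attracts exactly that colour, or it may be forced only collectively by the structure of $N(w)$. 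Tallying the first two possibilities, and noting that exactly one colour (namely $i$) is left unaccounted for, yields pattern (1) when every blocked colour comes from coloured neighbours (so $w$ is colour dominating), and pattern (2) when $k$ of them are attracted by u.c.e. neighbours $v_1,\dots,v_k$ and the remaining $\chi(G)-k-1$ come from coloured neighbours.

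The remaining, and hardest, case is when some colour stays permanently present in $N(w)$ without any single neighbour guaranteeing it. Here I would isolate a set $T\subseteq N(w)$ whose lists $L(v)$ all avoid $i$ and whose induced subgraph $G[T]$ cannot be properly coloured while omitting one of these permanently-present colours, and aim to show that $T$ can be taken with $\chi(G[T])=\chi(G)-1$, which is exactly pattern (3). I expect extracting this subgraph to be the main obstacle: it is easy to see that the relevant neighbours have lists avoiding $i$, but forcing the chromatic number of the obstructing subgraph to equal $\chi(G)-1$, rather than merely bounding it below, should require combining the hypothesis that $w$ is forced to a single colour with the fact that the whole colouring uses only $\chi(G)$ colours. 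Once patterns (1)--(3) are seen to exhaust the ways in which the $\chi(G)-1$ colours different from $i$ can be kept permanently present in $N(w)$, the observation follows.
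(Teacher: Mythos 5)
The paper offers no argument for this statement at all---it is asserted as an Observation without proof---so the only question is whether your proposal actually establishes it. Your treatment of the first assertion is correct: the induction along the iterative order, using the fact that every extension of $C_0$ must agree with the partial coloring built so far (and hence that each intermediate coloring $C_t$ is again extendable), is exactly what is needed. Your recolouring lemma is also correct and is the right reduction: since $w \notin S$, if some colour $j \neq i$ were absent from $N(w)$ in some extension, recolouring $w$ to $j$ would give a second proper extension, so $w$ is forced to $i$ precisely when every other colour is present in $N(w)$ in every extension. Patterns (1) and (2) then fall out of your colour-by-colour tally as you describe.

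The gap is exactly where you flag it, and it is not merely technical. In the ``collective'' case you need an induced subgraph of $N(w)$ of chromatic number $\chi(G)-1$ all of whose lists avoid $i$, but nothing in the hypothesis guarantees that the neighbours responsible for keeping the remaining colours present induce a subgraph of that chromatic number. Consider $\chi(G)=4$ with $w$ forced to colour $4$: colour $1$ may sit on a coloured neighbour $z$, while colours $2$ and $3$ are kept present only because $N(w)$ contains an edge $xy$ with $L(x)=L(y)=\{2,3\}$. If $z$ is adjacent to neither $x$ nor $y$, then $G[\{x,y,z\}]$ has chromatic number $2$, no single neighbour attracts colour $2$ or $3$, and none of the three patterns as literally stated applies. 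So before trying to complete the extraction of the subgraph in pattern (3), you need to decide whether the trichotomy is even true as written or whether (3) must be weakened (e.g.\ to a subgraph of $N(w)$ that admits no list colouring avoiding one of the permanently present colours); as it stands your plan cannot close, because the target statement is stronger than what your (correct) recolouring lemma yields.
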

\begin{obsvn}
	Let $G$ be a graph with $ \chi(G) \geq 3 $. Suppose $C_0$ is an extendable coloring of $G[S]$ for $S \subset V(G)$. If there is a vertex $v \notin S$ such that $deg(v) \leq k-2$, then $C_0$ is not a Sudoku coloring of $G$.
\end{obsvn}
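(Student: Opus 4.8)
The plan is to argue by contradiction using a counting argument on the colors appearing in the neighborhood of the low-degree vertex $v$. Suppose, for contradiction, that $C_0$ is a Sudoku coloring. Then $C_0$ extends to a \emph{unique} proper $k$-coloring $C$ of $G$, where $k = \chi(G)$. Since $v \notin S$, the vertex $v$ receives its color only in the extension $C$, not in $C_0$ itself; this is the feature I intend to exploit.

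The key step is the observation that a low-degree vertex leaves slack in the color budget. Because $\deg(v) \leq k-2$, the neighbors of $v$ number at most $k-2$, so the set $C(N(v))$ of colors appearing on $N(v)$ has size at most $k-2$. Hence at least $k-(k-2)=2$ of the $k$ colors are absent from $N(v)$. Since $C$ is a proper coloring, the current color $C(v)$ is one of these absent colors, which leaves at least one further color $c \neq C(v)$ that also does not appear on any neighbor of $v$.

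I would then complete the contradiction by recoloring. Define a new map $C'$ that agrees with $C$ everywhere except that $C'(v) = c$. Since $c$ is absent from $N(v)$, no edge incident to $v$ becomes monochromatic, and no other edge is affected, so $C'$ is again a proper $k$-coloring of $G$. Moreover $C'$ still extends $C_0$: the only vertex whose color was changed is $v$, and $v \notin S$, so $C'$ and $C_0$ agree on $S$. Thus $C$ and $C'$ are two distinct proper $k$-colorings extending $C_0$, contradicting uniqueness. Therefore $C_0$ cannot be a Sudoku coloring.

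I do not anticipate a serious obstacle here; the argument is essentially a pigeonhole count. The only point requiring a little care is verifying that the two "free" colors are genuinely distinct from each other and that exactly one of them is forced to be $C(v)$, which is where the hypothesis $\chi(G)\geq 3$ (guaranteeing $k-2 \geq 1$, so that $v$ actually has a color to differ from) and the strict bound $\deg(v)\leq k-2$ (rather than $k-1$) both enter.
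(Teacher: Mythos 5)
Your argument is correct: the paper states this as an Observation without proof, and the intended justification is exactly your pigeonhole count (a vertex of degree at most $k-2$ misses at least two colors on its neighborhood, so its color is never forced and can be swapped to produce a second extension). Nothing further is needed.
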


The following lemmas and definition are useful for us.
\begin{lem}
Let $G$ be a graph with $\chi(G) \geq 3$. Suppose $C_0$ is an extendable coloring of $G[S]$ for $S \subset V(G)$. If there is a pendant vertex $v \notin S$, then $C_0$ is not a Sudoku coloring \cite{maria2023sudoku}.
\end{lem}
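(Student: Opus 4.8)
The plan is to exploit the single degree of freedom that a pendant vertex enjoys under any proper colouring and to show that this freedom rules out uniqueness of the extension. First I would fix notation: let $v \notin S$ be the pendant vertex, and let $u$ be its unique neighbour. Since $C_0$ is assumed extendable, there is at least one proper $k$-colouring $f$ of $G$ that agrees with $C_0$ on $S$; I would start from such an $f$ and produce a second one.

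The key observation is that the only constraint properness imposes on the colour of $v$ is $f(v) \neq f(u)$, precisely because $v$ has no neighbour other than $u$. As $k = \chi(G) \geq 3$, the palette contains at least $k-1 \geq 2$ colours distinct from $f(u)$, so I can pick a colour $c$ with $c \neq f(u)$ and $c \neq f(v)$. I would then define $f'$ to coincide with $f$ everywhere except at $v$, setting $f'(v) = c$. Since the sole edge incident with $v$ is $uv$ and $c \neq f(u)$, that edge stays properly coloured, while every other edge is untouched; hence $f'$ is again a proper $k$-colouring extending $C_0$. Because $f' \neq f$, the extension is not unique, and therefore $C_0$ is not a Sudoku colouring.

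I do not anticipate a genuine obstacle here: in the language of the definition above, a pendant vertex can be a u.c.e.\ vertex only if its one neighbour already forbids all but a single colour, which is impossible once $k \geq 3$. In fact the statement is an immediate special case of the preceding degree observation, since a pendant vertex has degree $1 \leq k-2$ whenever $k \geq 3$, so one could simply invoke that observation. The only points needing care are checking that recolouring $v$ preserves properness (immediate from pendancy, as no other edge is affected) and that an extension exists at all (guaranteed by the hypothesis that $C_0$ is extendable).
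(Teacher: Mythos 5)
Your argument is correct and is the standard one: recolouring the pendant vertex with any of the at least $k-2\geq 1$ colours distinct from both $f(u)$ and $f(v)$ yields a second proper extension. The paper itself gives no proof of this lemma (it is quoted from the cited reference), and your observation that it is an immediate special case of the degree bound in Observation 1.2 is exactly the right way to see it, so there is nothing to add.
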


\begin{defn}
  Given a set $L(v)$ of colors for each vertex $v$ (called a list), a list coloring is a choice function that maps every vertex $v$ to a color in the list $L(v)$ \cite{jensen2011graph}.
\end{defn}

\begin{lem}
Suppose there exists a list coloring of the cycle $C_n = v_1v_2 \dots v_nv_1$ such that the list of colors for each vertex $v_i$ satisfies the following conditions.
\begin{itemize}
  \item $|L(v_i)| \geq 2$
  \item $L(v_i) \subseteq \{1,2,3\}$
\end{itemize} Then there are at least two list colorings of $C_n$ \cite{maria2023sudoku}.
\end{lem}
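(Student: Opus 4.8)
The plan is to take the proper list colouring whose existence is hypothesised, call it $c$, and manufacture a second one. The engine is a simple sub-lemma about paths: \emph{any path each of whose lists has at least two colours admits at least two proper list colourings.} To see this I would colour the path greedily from one end; the first vertex has at least two admissible colours, and every later vertex forbids at most the colour of its unique already-coloured neighbour, so, its list having size at least two, it keeps an admissible colour. Two distinct initial choices thus propagate to two colourings that already differ at the first vertex. I would then reduce the lemma to this fact in three stages, peeling off the easy situations first so that only one genuinely cyclic case remains.

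\textbf{Stage 1 (a full list).} If some vertex $v_j$ has $L(v_j)=\{1,2,3\}$, I delete it. What remains is the path $v_{j+1}v_{j+2}\cdots v_{j-1}$, whose lists still have size at least two, so by the sub-lemma it has at least two colourings. Each extends over $v_j$, since $v_j$ is forbidden at most the two colours of $v_{j-1}$ and $v_{j+1}$ while all three colours are available to it; distinct path colourings give distinct colourings of $C_n$. So henceforth every list has size exactly two. \textbf{Stage 2 (all lists equal).} If all lists equal a common pair $\{p,q\}$, then $c$ is a proper colouring of $C_n$ using only $p$ and $q$; hence $n$ is even, $c$ alternates, and interchanging $p$ and $q$ everywhere is a legal second colouring. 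So we may assume the lists are not all identical.

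\textbf{Stage 3 (the core).} Since the lists are not all equal and $C_n$ is connected, there is an edge $v_iv_{i+1}$ with $L(v_i)\neq L(v_{i+1})$; I pick a colour $p\in L(v_i)\setminus L(v_{i+1})$. Fixing $v_i:=p$ satisfies the edge $v_iv_{i+1}$ automatically, because $p\notin L(v_{i+1})$, and it leaves the path $R=v_{i+1}v_{i+2}\cdots v_{i-1}$ to be coloured subject only to the single extra constraint $c(v_{i-1})\neq p$ arising from the other neighbour of $v_i$. I would colour $R$ from its unconstrained end $v_{i+1}$, where the sub-lemma supplies a binary choice, propagate toward $v_{i-1}$, and adjoin $v_i=p$, the aim being a colouring of $C_n$ distinct from $c$.

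The main obstacle is the closing-up in Stage 3. When $p\in L(v_{i-1})$ the terminal requirement $c(v_{i-1})\neq p$ shrinks that list to a single colour, so one must verify that the binary freedom at the free end still yields two completions both meeting this far-end constraint, not merely one; this is precisely where the hypotheses are needed. The restriction $L(v_i)\subseteq\{1,2,3\}$ keeps every effective list at size at least two all the way to the last vertex, and the assumed existence of $c$ guarantees at least one valid completion from which the alternative is extracted. Equivalently, Stage 3 is the statement that a cycle is never uniquely $2$-list-colourable, and the careful bookkeeping of this single terminal constraint — showing the far end cannot veto both branches of the binary choice — is where the real work of the proof lies.
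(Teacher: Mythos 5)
The paper does not prove this lemma itself --- it is quoted from \cite{maria2023sudoku} --- so there is no in-paper argument to compare against; I can only assess your proposal on its own terms. Your sub-lemma on paths and your Stages 1 and 2 are correct. The problem is Stage 3, and it is not merely a deferred verification: the claim you isolate there, that after fixing $v_i:=p$ with $p\in L(v_i)\setminus L(v_{i+1})$ ``the far end cannot veto both branches of the binary choice,'' is false. Take $C_3$ with $L(v_1)=\{1,2\}$, $L(v_2)=\{2,3\}$, $L(v_3)=\{1,3\}$, and $c=(1,2,3)$. No list is full and the lists are not all equal, so you are in Stage 3. For the edge $v_1v_2$ you are forced to take $p=1$ and set $v_1=1$; then $v_3$ is forced to $3$ and $v_2$ is forced to $2$, so there is exactly \emph{one} list coloring of the cycle with $v_1=1$ --- the branch $v_2=3$ of your binary choice is indeed vetoed at the far end, and since $c(v_1)=1=p$ the unique completion is $c$ itself. (The same happens for every choice of edge in this example.) The lemma still holds here because $(2,3,1)$ is a second coloring, but it has $v_1=2\neq p$, so your construction can never reach it.

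The structural reason the strategy fails is that fixing $v_i$ to the color \emph{outside} $L(v_{i+1})$ only relaxes the cycle on one side; the other neighbor $v_{i-1}$ may have its effective list cut to a single color, after which the path $R$ can be rigid. A repair must allow $v_i$ to take its other color as well (or argue globally, e.g.\ via the known fact that no block of a cycle is outside the class $\{$cycles, complete graphs, complete bipartite graphs$\}$, so $C_n$ is not uniquely $2$-list colorable). As written, the ``real work'' you point to is not bookkeeping that remains to be done but a step that is genuinely wrong, so the proof is incomplete.
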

\begin{lem}
Let $L(x_i)$ be a list of colors of a vertex $x_i$ in the path $P_n = x_1x_2\dots x_n$, $1 \leq i \leq n$. If $|L(x_i)| \geq 2$ for each $i$, then there are at least 2 list colorings of $P_n$ \cite{maria2023sudoku}.
\end{lem}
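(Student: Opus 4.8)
The plan is to build the two required colorings by sequential (greedy) coloring along the path, exploiting the fact that in $P_n$ every vertex other than the first endpoint has exactly one neighbour that precedes it in the natural order $x_1, x_2, \dots, x_n$. First I would record the elementary extension fact: if $f$ is a proper list coloring of the initial segment $x_1 \dots x_{i-1}$, then it can be extended to $x_i$, because the only already-coloured neighbour of $x_i$ is $x_{i-1}$, so at most one color of $L(x_i)$ is forbidden, and $|L(x_i)| \geq 2$ guarantees that an admissible color remains. Iterating from $i = 2$ to $i = n$ then shows that \emph{any} choice of $f(x_1) \in L(x_1)$ extends to a proper list coloring of the whole path.

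To produce two \emph{distinct} colorings, I would fix two different colors $a \neq b$ in $L(x_1)$ (available since $|L(x_1)| \geq 2$) and run the sequential procedure once with $f(x_1) = a$ and once with $f(x_1) = b$. Each run yields a proper list coloring of $P_n$ by the extension fact, and the two resulting colorings already differ at $x_1$, hence are distinct. The base case $n = 1$ is immediate: the single vertex has $|L(x_1)| \geq 2$ admissible colors and there is no edge constraint, so at least two colorings exist.

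Alternatively, the identical conclusion follows by a short induction on $n$: given two distinct colorings of $P_{n-1}$ supplied by the induction hypothesis, each extends to $x_n$ by the same extension fact, and the two extensions remain distinct because they already disagree somewhere on $x_1 \dots x_{n-1}$. I do not expect a genuine obstacle here; the only point needing care is the bookkeeping that the two colorings produced are actually distinct, which is settled by \textbf{forcing a disagreement at the endpoint} $x_1$ (respectively, on the shorter subpath), together with handling $n = 1$, where there are no edges, as a separate trivial base case.
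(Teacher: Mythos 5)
Your argument is correct: fixing two distinct colors $a \neq b \in L(x_1)$ and extending greedily along the path (at each $x_i$, $i \geq 2$, only $x_{i-1}$ is an already-colored neighbour, so at most one color of $L(x_i)$ is forbidden and $|L(x_i)| \geq 2$ leaves an admissible choice) yields two proper list colorings that differ at $x_1$. The paper itself states this lemma without proof, citing the reference, and your greedy/inductive argument is exactly the standard one that would be expected there.
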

\begin{lem}
	Let $G$ be a graph with $\chi(G) \geq 3$. Suppose $C_0$ is an extendable coloring of $G[S]$ for $S \subset V(G)$. If there is an edge $xy$ for which $x,y \notin S$ such that $deg(x) \leq \chi(G)-1 $ and $deg(y) \leq \chi(G)-1$, then $C_0$ is not a Sudoku coloring of $G$\cite{maria2023sudoku}.
\end{lem}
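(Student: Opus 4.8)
The plan is to argue by contradiction using the list-coloring machinery already set up above. Write $k=\chi(G)$ and $C=\{1,2,\dots,k\}$ for the color set, and suppose toward a contradiction that $C_0$ \emph{is} a Sudoku coloring of $G$; then it extends to a \emph{unique} proper $k$-coloring $c$ of $G$. The strategy is to freeze $c$ everywhere except on the two endpoints of the edge $xy$, reduce the question of uniqueness to a list-coloring problem on the single edge $xy$ (that is, on a copy of $P_2$), and then invoke the path list-coloring lemma above to manufacture a second proper extension.

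First I would build the lists. Freeze the color $c(w)$ on every vertex $w\in V\setminus\{x,y\}$, and set
$L(x)=C\setminus\{\,c(w):w\in N(x)\setminus\{y\}\,\}$ and
$L(y)=C\setminus\{\,c(w):w\in N(y)\setminus\{x\}\,\}$.
Since $xy\in E(G)$, the only neighbor of $x$ that is not frozen is $y$, so $|N(x)\setminus\{y\}|\le \deg(x)-1\le (k-1)-1=k-2$, whence $|L(x)|\ge k-(k-2)=2$; symmetrically $|L(y)|\ge 2$. Note also that $c(x)\in L(x)$ and $c(y)\in L(y)$ because $c$ is proper, so the restriction $(c(x),c(y))$ is itself one proper list coloring of the edge $xy$ with respect to these lists.

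Next I would observe the crucial transfer property: \emph{any} proper list coloring of the edge $xy$ (a choice $a\in L(x)$, $b\in L(y)$ with $a\ne b$) combines with the frozen colors to give a proper $k$-coloring of all of $G$ that still agrees with $C_0$ on $S$. Indeed, the only edges whose propriety is in question are $xy$ itself (handled by $a\ne b$) and the edges joining $x$ or $y$ to a frozen neighbor; but by the very definition of $L(x)$ and $L(y)$ the chosen colors $a,b$ avoid all frozen-neighbor colors, and since $x,y\notin S$ the set $S$ is entirely frozen, so $C_0$ is respected. Because $|L(x)|\ge 2$ and $|L(y)|\ge 2$, the path list-coloring lemma stated above guarantees at least two distinct proper list colorings of the edge $xy$, and these yield two distinct proper extensions of $C_0$ to $G$, contradicting the uniqueness of $c$. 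Hence $C_0$ cannot be a Sudoku coloring.

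The step requiring the most care is the transfer property, specifically the treatment of \emph{common} neighbors of $x$ and $y$: one must be sure that a vertex adjacent to both endpoints does not create a hidden conflict when the edge is recolored. This is exactly why the lists are defined to delete the frozen-neighbor colors on each side, so that a common neighbor's color is forbidden in both $L(x)$ and $L(y)$ and no conflict can arise. I expect this verification, together with confirming that the second list coloring genuinely differs from $(c(x),c(y))$ as a coloring of $G$, to be the only nonroutine point. As a self-contained alternative that avoids the list lemma, one could instead show directly that uniqueness forces $\deg(x)=\deg(y)=k-1$ with rainbow neighborhoods (otherwise a single vertex could be recolored), and then swap the colors $c(x)\leftrightarrow c(y)$, checking properness edge by edge to produce the contradicting extension.
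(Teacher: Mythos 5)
Your proof is correct. Note that the paper itself gives no proof of this lemma --- it is quoted from \cite{maria2023sudoku} as a known result --- so there is nothing internal to compare against; but your argument (freeze the unique extension off $\{x,y\}$, observe that each endpoint's list has size at least $k-(k-2)=2$ because the other endpoint is excluded from its neighborhood count, and then apply the paper's path list-coloring lemma to $P_2=xy$ to produce a second extension) is a complete and valid derivation, and it correctly handles the one delicate point of common neighbors of $x$ and $y$ by deleting frozen-neighbor colors from both lists.
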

\section{Corona Product of Graphs}

\begin{thm}
Let $G$ be a graph with $\chi(G) \geq 3$. Then,
\begin{equation*}
  nm \leq sn(G \circ H) \leq nm + sn(G).
\end{equation*}
where $H$ is any graph of order $m \leq \chi(G)-2$.
\label{th1}
\end{thm}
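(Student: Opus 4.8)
The plan is to treat the two inequalities separately, after first pinning down the chromatic number of the corona. Write $V(G) = \{v_1,\dots,v_n\}$, let $H_i$ be the $i$-th copy of $H$, and set $k = \chi(G)$. I would first check that $\chi(G\circ H)=k$: the copy of $G$ sits inside $G\circ H$ as an induced subgraph, forcing $\chi(G\circ H)\ge k$; conversely, fixing any proper $k$-colouring of $G$ and colouring each $H_i$ with colours avoiding the colour assigned to $v_i$ --- possible because $\chi(H)\le m\le k-2<k-1$ --- yields a proper $k$-colouring of the whole corona, so $\chi(G\circ H)=k$.

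For the lower bound, the key observation is that every vertex of the copies is cheap in degree. If $w\in V(H_i)$, then its neighbourhood in $G\circ H$ consists of its $H$-neighbours together with the single vertex $v_i$, so $\deg_{G\circ H}(w)=\deg_H(w)+1\le (m-1)+1=m\le k-2$. By the Observation that any vertex $v\notin S$ with $\deg(v)\le k-2$ prevents $C_0$ from being a Sudoku colouring, every such $w$ must lie in the defining set $S$. Hence $S\supseteq\bigcup_{i=1}^n V(H_i)$, a set of $nm$ vertices, which gives $sn(G\circ H)\ge nm$ at once.

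For the upper bound I would build an explicit Sudoku colouring on $nm+sn(G)$ vertices. Start from an optimal Sudoku colouring of $G$ with defining set $S_G$, $|S_G|=sn(G)$, whose unique extension is a $k$-colouring $c$ of $G$; in particular the forced colour $c(v_i)$ of each $v_i$ is known in advance. Take $S=\big(\bigcup_{i=1}^n V(H_i)\big)\cup S_G$ and define $C_0$ by using $c$ on $S_G$ and, on each $H_i$, any proper colouring drawn from $\{1,\dots,k\}\setminus\{c(v_i)\}$ (which exists since $\chi(H)\le k-1$). I then claim $C_0$ is a Sudoku colouring. Any proper $k$-colouring of $G\circ H$ extending $C_0$ restricts on $V(G)$ to a $k$-colouring agreeing with $c$ on $S_G$; since $C_0|_{S_G}$ is a Sudoku colouring of $G$, this restriction must equal $c$, so every $v_i$ is forced to $c(v_i)$. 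As all vertices of the copies already lie in $S$, no further choice remains, and properness along the join edges holds because each $H_i$ was coloured avoiding $c(v_i)$. Thus the extension is unique and $sn(G\circ H)\le nm+sn(G)$.

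The main obstacle I anticipate is the upper-bound verification rather than the lower bound: one must argue carefully that pre-colouring the copies $H_i$ with the already-determined colours $c(v_i)$ neither destroys the uniqueness inherited from the Sudoku colouring of $G$ nor violates properness on the edges joining $v_i$ to $V(H_i)$. The hypothesis $m\le\chi(G)-2$ is exactly what guarantees both that $\chi(G\circ H)=\chi(G)$ and that each $H_i$ can be properly coloured after banning the colour $c(v_i)$, so I would keep that inequality in view throughout.
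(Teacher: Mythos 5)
Your proposal is correct and follows essentially the same route as the paper: the lower bound via the observation that every vertex of a copy of $H$ has degree at most $m+1-1 \le \chi(G)-2$ in $G\circ H$ and hence must be initially coloured, and the upper bound by augmenting a Sudoku colouring of $G$ with proper colourings of the copies $H_i$ that avoid the forced colour of $v_i$. You merely make explicit two points the paper leaves implicit --- that $\chi(G\circ H)=\chi(G)$ and the uniqueness verification for the constructed colouring --- which is a welcome tightening rather than a different argument.
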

\begin{proof}
Given that $\chi(G) \geq 3$. By Observation 1.2, all vertices in each copy of $H$ must be initially colored in any Sudoku coloring. Therefore, $sn(G \circ H) \geq nm$. Since $sn(G)$ is the Sudoku number of $G$, a Sudoku coloring of $G$ together with suitable colors given to the vertices in each copy of $H$ gives a uniquely extendable coloring of $G \circ H$ with $nm + sn(G)$ colors. Hence, $sn(G \circ H) \leq nm + sn(G)$. Hence the result.
\end{proof}
The bounds in the above theorem are tight and we have infinitely many graphs which attains the lower bound and which attains the upper bound.\\

\ni {\bf Graphs which attains the upper bound:} Let $G$ be a graph with $n(G)$ vertices and $\chi(G) \geq 4$. Let $G'$ be the graph obtained from $G$ by adding $ \chi(G) - 1$ pendant vertices to each vertex $v$ of $G$. Then $ n(G') = n(G)\chi(G) $.
By Lemma 1.1, all the $ n(G)[\chi(G)-1] $ pendant vertices of $ G' $, must be initially colored in any of the Sudoku coloring of $ G' $.
Therefore $ sn(G') \geq n(G)[\chi(G)-1] $. Let $C$ be an initial coloring of $G'$ as follows:

Color the $ \chi(G)-1 $ pendant vertices of each vertex $v$ of $G$ with $ \chi(G)-1 $ colors other than the color given to $v$ in the proper $ \chi- $ coloring of $G$. This will lead to a unique $ \chi- $ coloring of $ G' $. Hence $ sn(G') = n(G)[\chi(G)-1]$.\\

Now in Theorem \ref{th1}, take $G=G'$ and $H=K_1$ so that $nm+sn(G)= n(G')+sn(G')$. By Lemma 1.1, all the $ n(G)\chi(G) $ pendant vertices of $ G' \circ K_1 $ must be initially colored in any Sudoku coloring of $ G' \circ K_1 $. Also, the degree of each of the $ n(G)[\chi(G)-1]$ pendant vertices of $ G' $ is 2 in $ G' \circ K_1 $. Since $ \chi(G' \circ K_1) \geq 4 $, the color list of each of those vertices will be greater than or equal to 2. Hence, if any of the above $ n(G)[\chi(G)-1]$ vertices is not initially colored, it will not lead to a unique coloring of $ G' \circ K_1 $. Therefore,
\begin{equation}
	sn(G' \circ K_1) \geq n(G)\chi(G) + n(G)[\chi(G)-1]	
	\label{eqn1}
\end{equation}

Since $ n(G)[\chi(G)-1]$ is the Sudoku number of $ G' $, a Sudoku coloring of $ G' $ together with the suitable colors given to the $ n(G)\chi(G)$ pendant vertices of $ G' \circ K_1 $, gives a uniquely extendable coloring of $ G' \circ K_1 $. Therefore,
\begin{equation}
	sn(G' \circ K_1) \leq n(G)\chi(G) + n(G)[\chi(G)-1]
	\label{eqn2}
\end{equation}

From (\ref{eqn1}) and (\ref{eqn2}),
\begin{equation*}
	sn(G' \circ K_1) = n(G)\chi(G) + n(G)[\chi(G)-1] = n(G') + sn(G').
\end{equation*}

\ni{ \bf Graphs which attains the lower bound:} Now to prove that the lower bound is sharp, consider $G=$Line graph of $ C_n \circ K_1$ and $H=K_1$ in Theorem \ref{th1} so that $n(G)m=2n$. Figure \ref{f:L(CnoK1)} shows the graph $G$.

\begin{figure}[h]
	\centering
	\includegraphics[width=8cm]{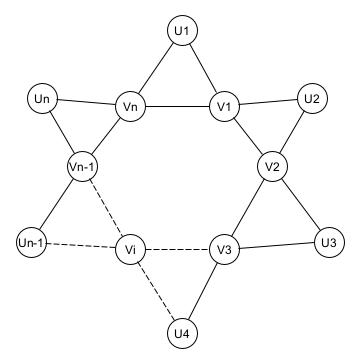}
	\caption{Line graph of $ C_n \circ K_1 $}
	\label{f:L(CnoK1)}
\end{figure}

 We know $ \chi(G \circ K_1) = 3 $. Let $ u_j' $ and $v_j' $ be the pendant vertices attached to $ u_j $ and $ v_j $ respectively, $ j = 1,2,3, \dots , n $. By Lemma 1.1, all the $ 2n $ pendant vertices of $ G \circ K_1 $ must be initially colored in any Sudoku coloring of $ G \circ K_1 $. Therefore $ sn(G \circ K_1)  \geq 2n $. \\

Now let $ n = 3k $ or $ 3k+2 $ ,where k is any natural number. Then the following is a uniquely extendable coloring of $ G \circ K_1 $.

\begin{equation*}
	 C(u_j')  =  C(v_j')  = \begin{cases}
		1, & \mbox{if } j \equiv 1(\text{mod} 3) \\
		2, & \mbox{if } j \equiv 2(\text{mod} 3) \\
		3, & \mbox{if } j \equiv 0(\text{mod} 3).
	\end{cases}
\end{equation*}

Again if $ n = 3k+1 $ , where k is any natural number, then the following is a uniquely extendable coloring of $ G \circ K_1 $.

\begin{equation*}
	C(u_n') = C(v_n') = 2 \and \text{ and } C(u_j') = C(v_j') = \begin{cases}
		1, & \mbox{if } j \equiv 1(\text{mod} 3) \text{ and } j \neq n \\
		2, & \mbox{if } j \equiv 2(\text{mod} 3) \\
		3, & \mbox{if } j \equiv 0(\text{mod} 3).
	\end{cases}
\end{equation*}
Thus $ sn(G \circ K_1) = 2n = n(G) $.\\

Figure \ref{f:L(C3oK1)oK1} gives the uniquely extendable coloring of corona of $ L(C_3 \circ K_1)$ with $ K_1$ where 6 vertices are initially colored, and its unique extension.\\

Figure \ref{f:L(C4oK1)oK1} gives the uniquely extendable coloring of corona of $ L(C_4 \circ K_1)$ with $ K_1$ where 8 vertices are initially colored, and its unique extension.

\begin{figure}[h]
	\centering
	\includegraphics[width=15cm]{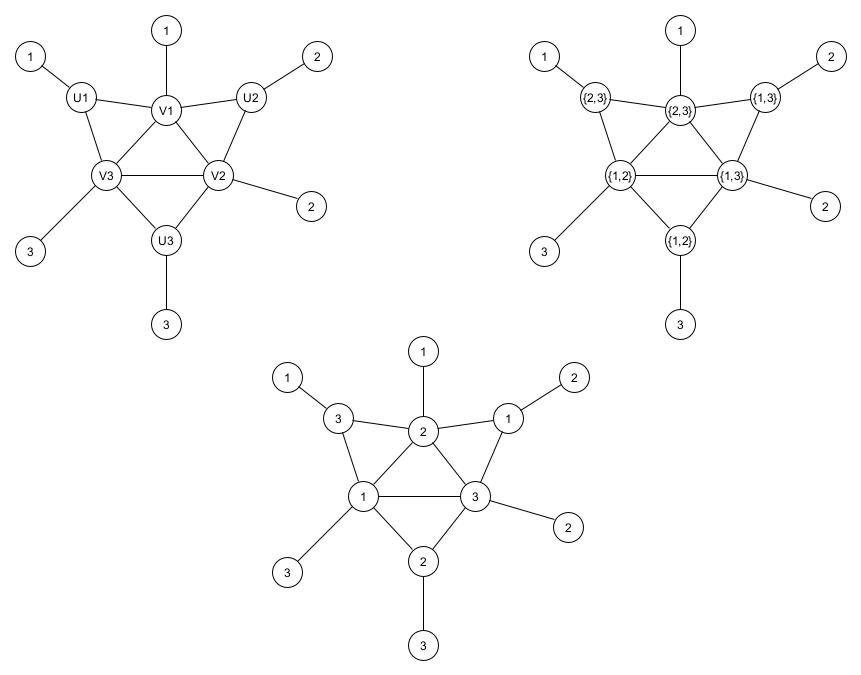}
	\caption{Uniquely extendable coloring of corona of $ L(C_3 \circ K_1)$ and $ K_1$}
	\label{f:L(C3oK1)oK1}
\end{figure}

\begin{figure}[h]
	\centering
	\includegraphics[width=15cm]{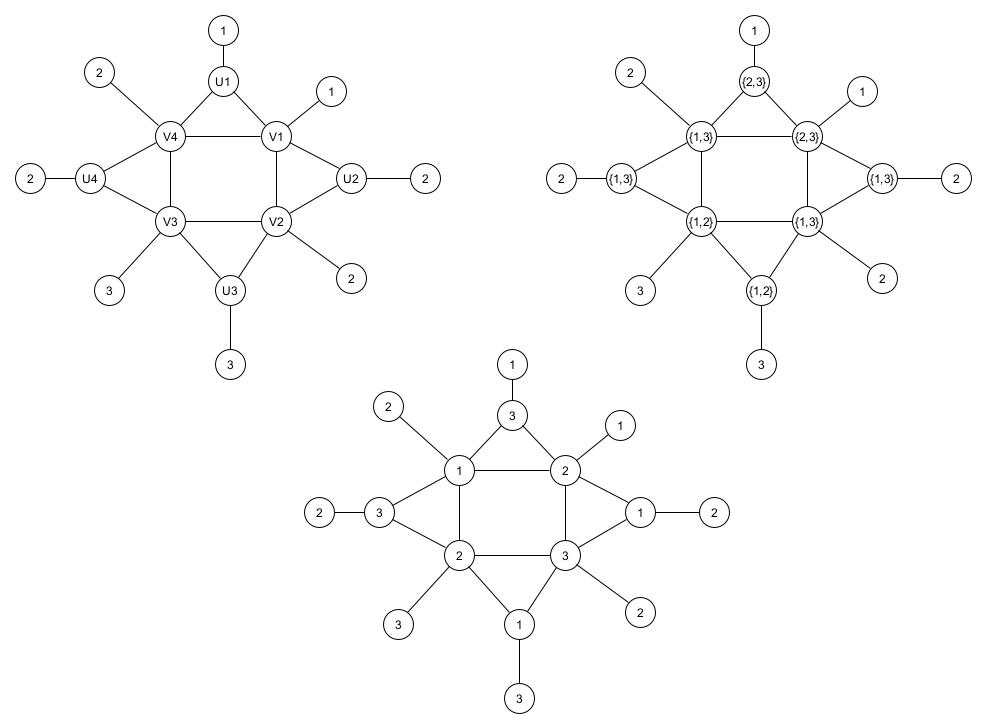}
	\caption{Uniquely extendable coloring of corona of $ L(C_4 \circ K_1)$ and $ K_1$}
	\label{f:L(C4oK1)oK1}
\end{figure}

\begin{thm}
For $n \geq 3$,
\begin{equation*}
  sn(C_n \circ K_1) = \begin{cases} 1; & \mbox{if n is even}\\
n+1;    & \mbox{if n is odd}.
\end{cases}
\end{equation*}
\end{thm}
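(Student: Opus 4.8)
The plan is to split into the even and odd cases, which differ in chromatic number: for even $n$ the cycle is bipartite and attaching one pendant per vertex preserves bipartiteness, so $\chi(C_n \circ K_1) = 2$, while for odd $n$ the odd cycle forces $\chi(C_n \circ K_1) = 3$. The even case I would settle directly: $C_n \circ K_1$ is connected and bipartite, hence has exactly two proper $2$-colorings (swapping the colors), so the empty partial coloring is not Sudoku and $sn \geq 1$; conversely, fixing the color of a single vertex propagates uniquely to all others along the connected graph, giving a unique extension, so $sn(C_n \circ K_1) = 1$.

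For odd $n$, write the cycle as $v_1 v_2 \cdots v_n v_1$ with pendant $u_i$ attached to $v_i$, and establish $sn \geq n+1$ in two steps. First, Lemma 1.1 forces every pendant $u_i$ into $S$, so $|S| \geq n$. Second, I would exclude $|S| = n$: if only the pendants are colored then each cycle vertex has list $L(v_i) = \{1,2,3\} \setminus \{C(u_i)\}$ of size $2$ inside $\{1,2,3\}$, so by Lemma 1.2 the cycle admits at least two list colorings and the coloring is not Sudoku. Hence $|S| \geq n+1$.

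For the matching upper bound I would exhibit a Sudoku coloring of order $n+1$ that uses all $n$ pendants plus the single cycle vertex $v_1$. Taking $C(u_i) = 3$ for $1 \leq i \leq n-1$, $C(u_n) = 1$, and $C(v_1) = 1$ gives lists $L(v_i) = \{1,2\}$ for $2 \leq i \leq n-1$ and $L(v_n) = \{2,3\}$, and I would check that the extension is forced step by step: $v_1 = 1$ forces $v_2 = 2$, then $v_3 = 1$, and so on, so that $v_i = 1$ for odd $i$ and $v_i = 2$ for even $i$ up to $i = n-1$; since $n$ is odd, $v_{n-1} = 2$ forces $v_n = 3$, consistent with $v_n \neq v_1$. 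Every step leaves a single admissible color, so the extension is unique and $sn(C_n \circ K_1) \leq n+1$, which with the lower bound yields equality.

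The step I expect to be the main obstacle is the upper-bound construction for odd $n$: the pendant colors must be chosen so that the induced lists force a unique propagation that closes up consistently around the cycle. The delicate point is that an odd cycle has no proper $2$-coloring, so the lists cannot all be $\{1,2\}$; the lone list $\{2,3\}$ at $v_n$ is exactly what absorbs this parity obstruction, and confirming that each uncolored vertex is uniquely color extendable in the sense of Definition 1.1 (i.e. that the already-colored neighbour's color always lies in the current list) is the heart of the argument.
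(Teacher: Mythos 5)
Your proof is correct, and it follows the paper's skeleton in the even case and in the lower bound for odd $n$: Lemma 1.1 forces all $n$ pendants into $S$, and Lemma 1.2 rules out $|S|=n$ because every cycle vertex would then have a list of size $2$ contained in $\{1,2,3\}$. Where you genuinely diverge is the upper-bound construction for odd $n$. The paper colors the pendants with the periodic pattern $1,2,3,1,2,3,\dots$ (together with $C(v_1)=3$), which forces it to split into two subcases according to whether $k=(n-1)/2$ is a multiple of $3$ in order to patch the wrap-around of the pattern at $u_n$. Your construction ($C(u_i)=3$ for $1\le i\le n-1$, $C(u_n)=1$, $C(v_1)=1$) instead makes every list equal to $\{1,2\}$ except $L(v_n)=\{2,3\}$, so the seed $v_1=1$ propagates a forced alternating $2$-coloring along $v_2,\dots,v_{n-1}$, and the single deviant list at $v_n$ absorbs the odd-cycle parity obstruction, with $v_n=3$ forced by its neighbours $v_{n-1}=2$, $v_1=1$ and $u_n=1$. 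This is cleaner: one construction covers all odd $n$ with no residue analysis modulo $3$, and the verification that each $v_i$ is iteratively uniquely colour extendable is a one-line induction. Both routes are equally rigorous; yours simply trades the paper's mod-$3$ bookkeeping for a single exceptional pendant colour.
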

\begin{proof}
Let $C_n = v_1v_2 \dots v_nv_1$ and let $u_1, u_2, \dots, u_n$ be the pendant vertices attached to $v_1,v_2, \dots, v_n$, respectively. When $n$ is even, $C_n \circ K_1$ is a connected bipartite graph. Hence, $sn(C_n \circ K_1) = 1$.\\

When $n$ is odd, $\chi(C_n \circ K_1) = 3$. By Lemma 1.1, all the $n$ pendent vertices of $C_n \circ K_1$ must be initially colored in any of the Suduku coloring of $C_n \circ K_1$. Therefore, $sn(C_n \circ K_1) \geq n$. If possible assume that $sn(C_n \circ K_1) = n$. That is, all the $n$ pendant vertices alone are initially colored. Then the list of colors available for each vertex $v_i$ in the cycle $C_n$ contains two colors. Therefore, $|L(v_i)| = 2$  and $L(v_i) \subset \{1,2,3\}$. But then the initial coloring is not a Sudoku coloring by Lemma 1.2, a contradiction.\\

Now, let $n = 2k + 1$, where $k$ is not a multiple of 3. Then the following is a uniquely extendable coloring of $C_n \circ K_1$.
\begin{equation*}
  C(v_1) = 3 \and \text{ and } C(u_j) = \begin{cases}
                             1, & \mbox{if } j \equiv 1(\text{mod} 3) \\
                             2, & \mbox{if } j \equiv 2(\text{mod} 3) \\
                             3, & \mbox{if } j \equiv 0(\text{mod} 3).
                           \end{cases}
\end{equation*}

Again, if $n = 2k + 1$, where $k$ is a multiple of 3. Then the following is a uniquely extendable coloring of $C_n \circ K_1$.
\begin{equation*}
  C(v_1) = 3, C(u_n) = 2 \and \text{ and } C(u_j) = \begin{cases}
                             1, & \mbox{if } j \equiv 1(\text{mod} 3) \text{ and } j \neq n \\
                             2, & \mbox{if } j \equiv 2(\text{mod} 3) \\
                             3, & \mbox{if } j \equiv 0(\text{mod} 3).
                           \end{cases}
\end{equation*}
Hence, the theorem.
\end{proof}

Figure~\ref{f:C5} gives the uniquely extendable coloring of $C_5 \circ K_1$ with 6 initially colored vertices and its unique extension.\\

\begin{figure}[h]
	\centering
	\includegraphics[width=15cm]{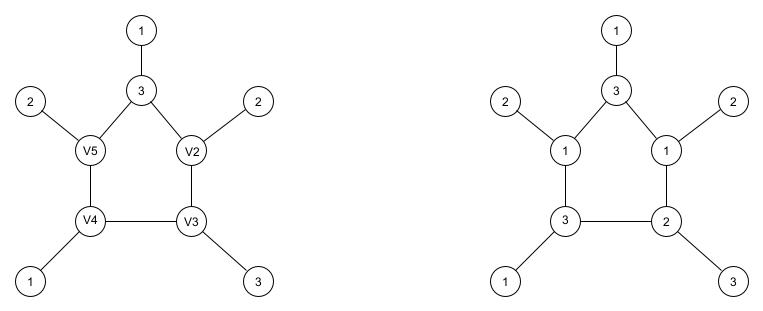}
	\caption{Uniquely extendable coloring of $C_5 \circ K_1$ and its final coloring}
	\label{f:C5}
\end{figure}

\begin{thm}
For $n \geq 3$,
\begin{equation*}
  sn(W_n \circ K_1) = \begin{cases} n+1; & \mbox{if n is even}\\
n+3;    & \mbox{if n is odd}.
\end{cases}
\end{equation*}
\end{thm}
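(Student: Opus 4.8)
The plan is to treat the two parities separately, after fixing notation: $W_n$ denotes the wheel on $n+1$ vertices, with hub $c$ joined to the rim cycle $C_n=v_1v_2\cdots v_n$, and $u_0,u_1,\dots,u_n$ the pendants attached to $c,v_1,\dots,v_n$ in $W_n\circ K_1$. Since attaching pendants does not change the chromatic number, $\chi(W_n\circ K_1)=\chi(W_n)$, which is $3$ when $n$ is even and $4$ when $n$ is odd. In both cases $\chi\ge 3$, so by Lemma~1.1 every one of the $n+1$ pendants must be precoloured, giving $sn(W_n\circ K_1)\ge n+1$ at once.

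For $n$ even ($\chi=3$) I would prove $sn=n+1$ by precolouring the pendants alone so that the extension is forced. Colour $u_0$ and $u_1$ with $1$ and every other rim pendant with $2$; this assigns the hub the list $\{2,3\}$, assigns $v_1$ the list $\{2,3\}$, and assigns $v_2,\dots,v_n$ the list $\{1,3\}$. The key point is that the hub's two options behave asymmetrically: taking $c=3$ forces the adjacent rim vertices $v_2,v_3$ both to colour $1$, impossible since $v_2v_3\in E(C_n)$, whereas taking $c=2$ forces $v_1=3$ and then propagates the alternating pattern $3,1,3,1,\dots$ around the rim, which closes up consistently precisely because $n$ is even. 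Thus exactly one extension survives and $sn=n+1$.

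For $n$ odd ($\chi=4$) the rim is an odd cycle, so in every proper $4$-colouring the rim uses exactly three colours and the hub takes the fourth. For the upper bound I would precolour all $n+1$ pendants together with the hub and one rim vertex, a total of $n+3$: set $c=4$ (with $u_0\ne 4$) and $v_1=1$, then choose $u_2,\dots,u_n$ so that, reading around the rim, each $v_i$ is squeezed to a single admissible colour by its already–determined predecessor, the hub colour $4$, and its own pendant. The chain terminates at $v_n$, whose forced colour automatically differs from $v_1=1$ because $v_n$ is adjacent to $v_1$, so no conflict arises and the extension is unique; hence $sn\le n+3$.

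The heart of the argument is the matching lower bound $sn\ge n+3$ for odd $n$. Assuming a Sudoku colouring, all $n+1$ pendants are precoloured, so it suffices to show that at least two core (hub or rim) vertices are precoloured. If at most one core vertex is coloured I would argue non-uniqueness in three cases. When no core vertex is coloured, fix the hub colour $h$ in any extension; the rim then carries lists of size $\ge 2$ inside the $3$-element palette $\{1,2,3,4\}\setminus\{h\}$, so Lemma~1.2 produces a second rim colouring and the extension is not unique. The identical Lemma~1.2 argument applies when the single coloured core vertex is the hub. The remaining case—exactly one rim vertex coloured and the hub left free—is the \emph{main obstacle}, because Lemma~1.2 no longer applies: the coloured rim vertex has a singleton list. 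Here I would exploit the freedom of the uncoloured hub, which still sees only two coloured neighbours ($u_0$ and the coloured rim vertex) and hence retains at least two admissible colours; deleting the coloured rim vertex leaves a path whose size-$3$ lists (Lemma~1.3) should permit the would-be hub colour to be rerouted onto the rim, yielding a second extension with a different hub colour. Making this rerouting rigorous—ruling out that the pendants can be chosen so that all but one hub colour becomes infeasible—is the step that will demand the most care, while every other case reduces cleanly to Lemmas~1.2 and~1.3 and the parity of the rim.
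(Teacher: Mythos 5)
Your overall route is the same as the paper's: Lemma~1.1 forces all $n+1$ pendants to be precoloured, the even case is settled by exactly the precolouring the paper uses (hub-pendant and $u_1$ get colour $1$, the rest colour $2$) with the same forcing argument, and the odd lower bound is attacked by the same three-way case split on whether zero or one core vertices are precoloured, with Lemma~1.2 disposing of the first two cases. But the case you flag as the ``main obstacle'' --- exactly one rim vertex coloured, hub free --- is left genuinely open in your write-up, and your proposed fix (rerouting the hub colour) is not the right tool: a different hub colour $h'$ shrinks the two endpoint lists of the path $v_2\cdots v_n$ to size possibly $1$, and one can arrange the pendant colours so that no completion with hub colour $h'$ exists, so ``a second extension with a different hub colour'' need not exist. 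The paper instead applies Lemma~1.3 directly to the path $P_{n-1}=v_2v_3\cdots v_n$: each $v_j$ there has only two precoloured neighbours ($u_j$, plus $v_1$ for the two endpoints), so $|L(v_j)|\ge 4-2=2$, and Lemma~1.3 yields two list colourings of the path. Your instinct that something is delicate here is not baseless --- to turn two list colourings of the path into two extensions of the whole graph one should first fix the hub at its colour $h$ in the assumed unique extension and check the resulting lists still have size $\ge 2$ (internal vertices clearly do, and the endpoints need a word) --- but the intended resolution is this short Lemma~1.3 argument, not a hub-recolouring argument, and you should complete it along those lines rather than the route you sketched.

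A secondary point: in the odd upper bound you assert that the forced colour of $v_n$ ``automatically differs from $v_1=1$ because $v_n$ is adjacent to $v_1$.'' That adjacency does not make the conflict impossible; it makes a bad choice of pendant colours non-extendable (empty list at $v_n$), which is just as fatal since a Sudoku colouring must be extendable. You must exhibit a pendant pattern for which the forced colour of $v_n$ is not $1$; the paper does this explicitly with the $1,2,3$-periodic assignment $C(u_j)\equiv j \pmod 3$, adjusted at $u_n$ when $k\equiv 0 \pmod 3$ (where $n=2k+1$). With that pattern written down and the third lower-bound case closed via Lemma~1.3, your proof coincides with the paper's.
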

\begin{proof}
Let  the wheel $W_n$ of order $n+1$ be obtained from $C_n = v_1v_2\dots v_nv_1$ by joining a vertex $v$ to every vertex of $C_n$. Let $u_1, u_2, \dots, u_n, u$ be the corresponding pendent vertices in $W_n \circ K_1$.\\

By Lemma 1.1, all the $n+1$ pendent vertices of $W_n \circ K_1$ must be initially colored in any of the Suduku coloring of $W_n \circ K_1$, so that $sn(W_n \circ K_1) \geq n+1$.\\

When $n$ is even, $\chi(W_n \circ K_1) = 3$. Let $C$ be an initial coloring of $W_n \circ K_1$, where $C(u) = C(u_1) = 1$ and $C(u_i) = 2$, for every $i = 2, 3, \dots, n$. By condition $3)$ in observation 1.1, $v$ is a u.c.e vertex which attracts the color 2 alone and in turn $v_1$ becomes a u.c.e vertex which attracts color 3 alone. Now, iteratively, each $v_i$ for $i = 2,3,\dots,n$ becomes u.c.e vertices which attracts colors 1 and 3, alternatively. So $C$ is a uniquely extendable coloring of $W_n \circ K_1$ with $n+1$ initially colored vertices. Hence, $sn(W_n \circ K_1) = n+1$, when $n$ is even.\\

When $n$ is odd, $\chi(W_n \circ K_1) = 4$.\\

Now, let $n = 2k + 1$, where $k$ is not a multiple of 3. Then, using similar arguments, it follows that the following is a uniquely extendable coloring of $W_n \circ K_1$.
\begin{equation*}
  C(v) = 4; C(v_1) = 3; C(u) = 1 \and \text{ and } C(u_j) = \begin{cases}
                             1, & \mbox{if } j \equiv 1(\text{mod} 3) \\
                             2, & \mbox{if } j \equiv 2(\text{mod} 3) \\
                             3, & \mbox{if } j \equiv 0(\text{mod} 3).
                           \end{cases}
\end{equation*}

Again, if $n = 2k + 1$, where $k$ is a multiple of 3. Then the following is a uniquely extendable coloring of $W_n \circ K_1$.
\begin{equation*}
  C(v) = 4; C(v_1) = 3; C(u) = 1; C(u_n) =2, \and C(u_j) = \begin{cases}
                             1, & \mbox{if } j \equiv 1(\text{mod} 3), j \neq n \\
                             2, & \mbox{if } j \equiv 2(\text{mod} 3) \\
                             3, & \mbox{if } j \equiv 0(\text{mod} 3).
                           \end{cases}
\end{equation*}
Therefore, $sn(W_n \circ K_1) \leq n+3$.\\

If possible assume that $sn(W_n \circ K_1) \leq n + 2$ and let $C$ be the corresponding extendable coloring, where $S$ denotes the set of all colored vertices. Since the $n+1$ pendant vertices should be initially colored in any Sudoku coloring, there is at most 1 colored vertex in the wheel $W_n$.\\

If none of the vertices in $W_n$ is colored (that is, $|S| = n+1$), then $G - S$ contains a  cycle $C_n$ with $|L(v_j)| = 3$  and $L(v_j) \subset \{1, 2, 3, 4\}$. Hence $C$ is not a Sudoku coloring, a contradiction. If exactly one vertex in $W_n$ is colored in $C$, then it could be either $v$ or any of the $v_i$'s. If the vertex $v$ is colored with, say color 1, then again $G - S$ contains the cycle $C_n$ with $|L(v_j)| \geq 2$  and $L(v_j) \subseteq \{2, 3, 4\}$. Hence $C$ is not a Sudoku coloring, a contradiction. If $v_i$, without loss of generality, say $v_1$ is colored, then there exists a path $P_{n-1} = v_2v_3\dots v_n$ with $|L(v_j)| \geq 2$. Hence $C$ is not a Sudoku coloring by Lemma 1.3, a contradiction. Hence, $sn(W_n \circ K_1) = n+3$.
\end{proof}

Figure~\ref{f:W4} gives the uniquely extendable coloring of $W_4 \circ K_1$ with 5 initially colored vertices, the color list available for each uncolored vertex and its unique extension.\\

\begin{figure}[h]
	\centering
	\includegraphics[width=15cm]{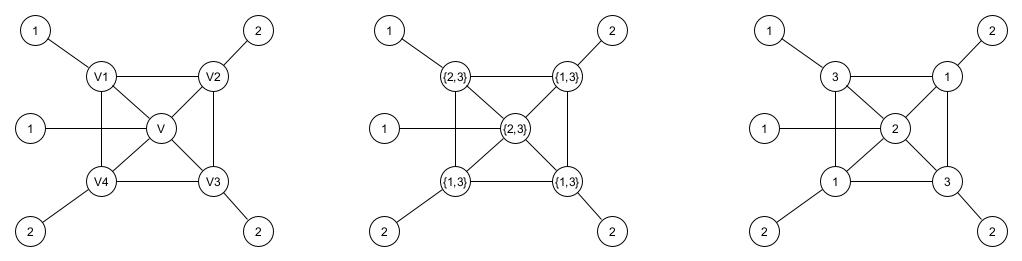}
	\caption{Uniquely extendable coloring of $W_4 \circ K_1$}
	\label{f:W4}
\end{figure}

Figure~\ref{f:W5} gives the uniquely extendable coloring of $W_5 \circ K_1$ with 8 initially colored vertices and its unique extension.\\

\begin{figure}[h]
	\centering
	\includegraphics[width=15cm]{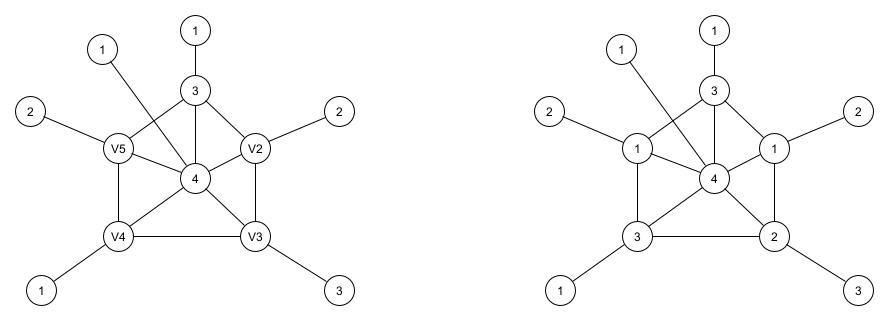}
	\caption{Uniquely extendable coloring of $W_5 \circ K_1$ and its final coloring}
	\label{f:W5}
\end{figure}
Note that when $ n $ is even, $ W_n \circ K_1 $ gives an infinite family of graphs which attains the lower bound of Theorem \ref{th1}.
\begin{thm}
	For $n \geq 4$,
	\begin{equation*}
		sn(W_n \circ K_2) = \begin{cases} n+2; & \mbox{if n is even}\\
			2(n+1);    & \mbox{if n is odd}.
		\end{cases}
	\end{equation*}
\end{thm}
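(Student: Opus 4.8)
The plan is to split on the parity of $n$, because $\chi(W_n\circ K_2)=3$ when $n$ is even and $\chi(W_n\circ K_2)=4$ when $n$ is odd. Write $W_n$ as the cycle $C_n=v_1v_2\cdots v_nv_1$ together with the hub $v$; let $\{a_i,b_i\}$ denote the copy of $K_2$ attached to $v_i$ and $\{a,b\}$ the copy attached to $v$. Two structural features drive everything: each of $\{v_i,a_i,b_i\}$ and $\{v,a,b\}$ induces a triangle, and every copy vertex has degree exactly $2$ in $W_n\circ K_2$.

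For odd $n$ one has $\chi(W_n)=4$ and $m=2=\chi(W_n)-2$, so Theorem~\ref{th1} applies and yields $sn(W_n\circ K_2)\ge nm=2(n+1)$. In fact, since every copy vertex has degree $2=\chi-2$, Observation 1.2 forces all $2(n+1)$ copy vertices into $S$, so the lower bound and the requirement that $S$ consist of exactly the copy vertices hold simultaneously. It then remains to colour the two vertices of each copy with two distinct colours so that the extension is unique. Doing so gives every wheel vertex $w$ a list $L(w)$ of size $2$, namely the two colours absent from its copy, and the task reduces to choosing the copy colours so that the induced size-$2$ list assignment on $W_n$ is uniquely colourable. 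I would first pin the hub to a single colour $c$ by making its alternative colour leave the odd cycle with an infeasible list assignment, and then run a forcing chain around $C_n$: arranging consecutive lists of the form $\{c_i,c_{i+1}\}$ so that once a first cycle vertex is determined, each successive $v_i$ has only one admissible colour. Uniqueness is then confirmed with Lemmas 1.2 and 1.3, giving $sn(W_n\circ K_2)=2(n+1)$.

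For even $n$ we have $\chi=3$, so $\chi-1=2$ and each copy edge $a_ib_i$ (and $ab$) joins two uncoloured vertices of degree $2$; by Lemma 1.4 at least one endpoint of each of the $n+1$ copy edges must be coloured, giving $sn(W_n\circ K_2)\ge n+1$. For the upper bound I would colour both vertices $a,b$ of the hub copy with distinct colours and one vertex of each cycle copy, a total of $n+2$ vertices; then $L(v)$ has size $1$, so the hub is forced, after which each $v_i$ is adjacent to the determined hub and to its coloured copy vertex, leaving a single admissible colour, and the uncoloured $b_i,b$ are pendant and forced. This realises $sn(W_n\circ K_2)\le n+2$.

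The main obstacle is the matching lower bound $sn(W_n\circ K_2)\ge n+2$ in the even case, i.e.\ ruling out every colouring that uses exactly one vertex of each copy and no wheel vertex. After such a colouring each $v_i$ and $v$ receives a size-$2$ list contained in $\{1,2,3\}$, the residual problem is a list colouring of $W_n$, and the uncoloured copy vertices $b_i,b$ are pendant and forced, so they contribute no freedom. One is tempted to invoke Lemma 1.2 on $C_n$ to manufacture a second extension, but the hub is adjacent to all of $C_n$ and can shrink the cycle lists; the argument must therefore control the interaction between the hub's two admissible colours and the multiplicity of cycle colourings, and making this step airtight is where the real work lies.
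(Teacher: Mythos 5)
Your treatment of the odd case and of the even-case upper bound follows essentially the paper's route: when $n$ is odd, Observation 1.2 forces all $2(n+1)$ copy vertices into $S$ because each has degree $2=\chi-2$, and a suitable choice of two colours per copy makes the induced size-$2$ list assignment on $W_n$ uniquely colourable (the paper writes such a colouring down explicitly; you only sketch the forcing chain, so that construction still needs to be exhibited and checked). When $n$ is even, your $(n+2)$-vertex construction works, provided you also say that each rim copy vertex must avoid the hub's forced colour and that the forced rim colours must alternate properly. The genuine gap is exactly where you locate it: the even-case lower bound $sn(W_n\circ K_2)\ge n+2$. You leave that step open, warning that the hub's interaction with the rim lists must be controlled before Lemma 1.2 can be invoked. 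That warning is not a technicality --- it is fatal. The paper's own proof applies Lemma 1.2 to the rim $C_n$ with the size-$2$ lists induced by the coloured copy vertices and concludes there are two extensions; but a second list colouring of $C_n$ need not be compatible with any admissible colour of the hub $v$, which is adjacent to every rim vertex, so it need not produce a second extension of $W_n\circ K_2$.

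In fact the step cannot be repaired, because the inequality is false. Take $n=4$ and, in your notation, colour one vertex of each of the five copies: $c(a)=3$, $c(a_1)=c(a_2)=1$, $c(a_3)=3$, $c(a_4)=2$, so that $L(v)=\{1,2\}$, $L(v_1)=L(v_2)=\{2,3\}$, $L(v_3)=\{1,2\}$, $L(v_4)=\{1,3\}$. If $v=2$, then $v_1$ and $v_2$ are both forced to colour $3$, impossible since $v_1v_2$ is a rim edge; hence $v=1$, which forces $v_3=2$ and $v_4=3$, then $v_2=3$ and $v_1=2$, and each uncoloured copy vertex $b,b_i$ has two distinctly coloured neighbours and is forced. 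This is a uniquely extendable colouring with only $n+1=5$ initially coloured vertices, and Lemma 1.4 gives $sn(W_4\circ K_2)\ge 5$, so $sn(W_4\circ K_2)=5$, not $6$. The device --- choose the copy colours so that one of the hub's two admissible colours collapses two adjacent rim lists to the same singleton while the other starts a forcing chain around the rim --- generalizes to every even $n\ge 4$, giving $sn(W_n\circ K_2)=n+1$ there. So the even case of the statement, the paper's proof of it, and any completion of your plan along these lines are all unsalvageable; only the odd case stands.
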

\begin{proof}
	Let  the wheel $W_n$ of order $n+1$ be obtained from $C_n = v_1v_2\dots v_nv_1$ by joining a vertex $v$ to every vertex of $C_n$. Let $u_i$ and $u_i'$ be the vertices of $K_2$ corresponding to the vertex $v_i$ for $i= 1,2, \dots n$ and let $u$ and $u'$ be the vertices of $K_2$ corresponding to the vertex $v$.\\
	
	When $n$ is even, $ \chi(W_n \circ K_2)=3$. By Lemma 1.4, atleast one vertex from each copy of $K_2$ must be initially colored. Therefore, $sn(W_n \circ K_2) \geq n+1$.\\
	Suppose exactly one vertex from each copy of $K_2$ are initially colored. Let $C$ be the corresponding extendable coloring and $S$ be the set of initially colored vertices. Then $G-S$ contains a cycle with $|L(v_i)| = 2$  and $L(v_i) \subset \{1, 2, 3\}$. Then by Lemma 1.2, $C$ is not a Sudoku coloring. Hence $sn(W_n \circ K_2) \geq n+2$.\\
	
	Let $C$ be an initial coloring of $W_n \circ K_2$, where $C(v)=3$, $C(u')=C(u_1')=1$ and $C(u_i')=3$ for $i=2,3, \dots n$. Then $C$ is a uniquely extendable coloring with $n+2$ initially colored vertices. Hence $sn(W_n \circ K_2)= n+2$ when $n$ is even.\\
	
	When $n$ is odd, $ \chi(W_n \circ K_2)=4$. By Observation 1.2, all vertices in each copy of $K_2$ must be initially colored. So, $sn(W_n \circ K_2) \geq 2(n+1)$. Let $C$ be an initial coloring of $W_n \circ K_2$, where $C(u)=C(u_1)=1$, $C(u_i)=2$ for $i=2,3, \dots n$, $C(u')=C(u_i')=4$ for $i= 1,2, \dots (n-1)$ and $C(u_n')=1$. Then $C$ is a uniquely extendable coloring with $2(n+1)$ initially colored vertices. Hence $sn(W_n \circ K_2)= 2(n+1)$ when $n$ is odd.
\end{proof}
Figure~\ref{f:w4} gives the uniquely extendable coloring of $W_4 \circ K_2$ with 6 initially colored vertices and its unique extension.\\

\begin{figure}[h]
	\centering
	\includegraphics[width=13cm]{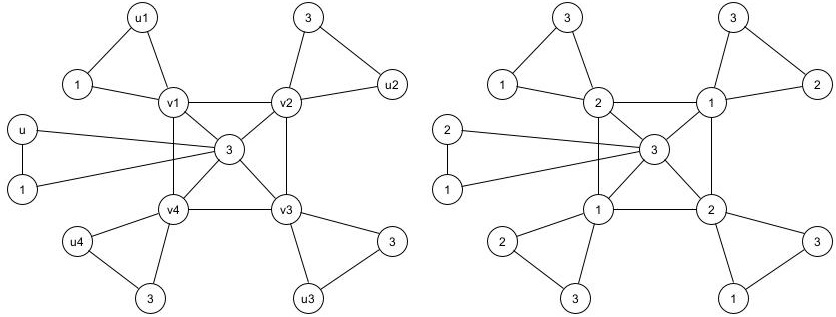}
	\caption{Uniquely extendable coloring of $W_4 \circ K_2$}
	\label{f:w4}
\end{figure}

Figure~\ref{f:w5} gives the uniquely extendable coloring of $W_5 \circ K_2$ with 12 initially colored vertices, the color list available for each uncolored vertex and its unique extension.\\

\begin{figure}[h]
	\centering
	\includegraphics[width=15cm]{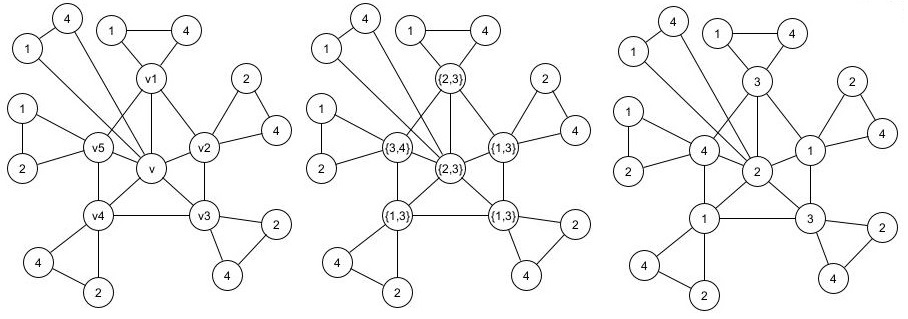}
	\caption{Uniquely extendable coloring of $W_5 \circ K_2$ and its final coloring}
	\label{f:w5}
\end{figure}

\begin{thm}
For $n \geq 3$,
\begin{equation*}
	sn(K_n \circ K_m) = \begin{cases} nm+n-m-1; & \mbox{where $m\leq n-2$ }\\
		n(m-1)+1;    & \mbox{where $m\geq n-1$}.
	\end{cases}
\end{equation*}
\end{thm}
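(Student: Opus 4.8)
The plan is to first pin down $k=\chi(K_n\circ K_m)$, since the whole notion of Sudoku colouring is relative to the chromatic number. Write $v_1,\dots,v_n$ for the vertices of the central $K_n$, and let the $i$-th copy of $K_m$ together with $v_i$ form a block $B_i\cong K_{m+1}$; the only edges joining distinct blocks are the edges $v_iv_j$ of $K_n$. Hence $\omega=\max(n,m+1)$, and a short colouring argument gives $\chi(K_n\circ K_m)=n$ when $m\le n-2$ and $\chi(K_n\circ K_m)=m+1$ when $m\ge n-1$, which is exactly the case split of the statement. The single fact doing the real work in both regimes is the complete-graph analogue of Lemmas 1.2 and 1.3, which I would establish first as a claim: \emph{if every vertex of a complete graph carries a colour-list of size at least $2$ and there is at least one proper list-colouring, then there are at least two.} I would prove this by imitating the unique-perfect-matching argument: fix a proper colouring $c$, choose for each vertex $v$ an alternative colour $c'(v)\in L(v)\setminus\{c(v)\}$, and follow the map sending $v$ to the vertex coloured $c'(v)$; either some $c'(v)$ is used by no vertex (recolour $v$ to $c'(v)$) or this map on a finite set has a directed cycle (rotate the colours around it), and in either case a second proper colouring appears.

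\textbf{Case $m\le n-2$ (here $k=n$).} Each copy-vertex has degree $m\le n-2=k-2$, so by Observation 1.2 all $nm$ copy-vertices must be pre-coloured, giving $sn\ge nm$. Once every copy is coloured, each $v_i$ acquires a list of size $n-m$ (the colours absent from copy $i$), and the $v_i$ must get an all-distinct colouring of $K_n$. If only $t\le n-m-2$ of the $v_i$ are pre-coloured, every uncoloured $v_i$ still has an available list of size $\ge(n-m)-t\ge 2$, and applying the claim to the clique on the uncoloured $v_i$ produces a second extension; hence at least $n-m-1$ of the $v_i$ are needed and $sn\ge nm+n-m-1$. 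For the upper bound I would take target colours $v_i=i$, pre-colour $v_{m+2},\dots,v_n$, colour each copy $i$ with $i\le m+1$ using $\{1,\dots,n\}\setminus(\{i\}\cup\{m+2,\dots,n\})$, and colour each remaining copy with any $m$ colours avoiding its pre-coloured $v_i$; after deleting the pre-coloured colours every uncoloured $v_i$ is forced to $i$, so this uses $nm+(n-m-1)$ vertices and extends uniquely.

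\textbf{Case $m\ge n-1$ (here $k=m+1$).} Now Observation 1.2 is vacuous, so I would use Lemma 1.4: two copy-vertices in the same copy are adjacent and each has degree $m=k-1$, so at most one copy-vertex per copy may be uncoloured, forcing $\ge m-1$ pre-coloured per copy and $sn\ge n(m-1)$. To gain the extra unit, suppose exactly $m-1$ vertices in each copy and no $v_i$ are pre-coloured; in block $i$ the two uncoloured vertices $v_i$ and the lone uncoloured copy-vertex share the same $2$-element list, so fixing $v_i$ determines its partner, while the $v_i$ form a $K_n$ with $2$-element lists and the claim yields a second all-distinct choice, hence a second extension — contradiction. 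Thus $sn\ge n(m-1)+1$. For the upper bound I would pre-colour all but one vertex in each copy together with $v_1$, choosing $S_i=\{1,\dots,m+1\}\setminus\{i-1,i\}$ for the coloured copy-vertices so that the lists run $L(v_i)=\{i-1,i\}$ and $v_i$ is forced along the chain $v_1\to v_2\to\cdots\to v_n$ (the first block handled directly), giving a unique extension with $n(m-1)+1$ vertices.

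The main obstacle is the claim above: in both cases everything collapses to controlling the uniqueness of an all-distinct colouring of a clique whose vertices carry small lists, and the cycle and path lemmas already in the paper do not cover the complete graph. Once the claim is available, the two lower bounds and the matching constructions are routine; the only care needed elsewhere is to verify that the constructed partial colourings really force every uncoloured $v_i$ and that the leftover colour in each block is determined once the $v_i$ are fixed.
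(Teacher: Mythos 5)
Your proposal is correct, and its overall skeleton matches the paper's: Observation 1.2 forces all $nm$ copy-vertices to be pre-coloured when $m\le n-2$, Lemma 1.4 forces at least $m-1$ per copy when $m\ge n-1$, and the upper-bound constructions you give are essentially the paper's (identical in Case I; in Case II you force $v_i$ along a chain $v_1\to v_2\to\cdots\to v_n$ where the paper forces every $v_i$ directly from $v_1$, but both are the same $n(m-1)+1$-vertex scheme). The genuine difference is your clique list-colouring claim: \emph{if every vertex of a complete graph has a list of size at least two and some proper list-colouring exists, then at least two exist}, proved by the fixed-point-free functional digraph / colour-rotation argument. This claim is sound (the rotation along a directed cycle permutes colours already present on the clique, so properness and list-membership are preserved, and the cycle has length at least two since $c'(v)\neq c(v)$), and it supplies a justification that the paper omits: at the corresponding points the paper merely asserts that ``at least 2 among them are not u.c.e.\ vertices'' without exhibiting a second extension, whereas its own Lemmas 1.2 and 1.3 cover only cycles and paths, not cliques. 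So your route buys a rigorous lower bound in both cases at the cost of one extra lemma; the paper's route is shorter but leaves that step unproved. The only points to tidy up are small: state explicitly that $\chi(K_n\circ K_m)=\max(n,m+1)$ before splitting into cases, fix the colour set for the coloured vertices of the first copy in your Case II construction (your formula $S_i=\{1,\dots,m+1\}\setminus\{i-1,i\}$ degenerates at $i=1$, which you acknowledge), and note in Case II that each proper list-colouring of the central $K_n$ extends uniquely to the lone uncoloured vertex of each copy, so distinct clique colourings really do give distinct extensions of the whole graph.
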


\begin{proof}
Let $v_1,v_2, \dots v_n$ be the vertices of $K_n$. \\

Case I: $m \leq n-2$\\

If possible, assume that there exists an extendable coloring $C$ of $G[S]$ where $S$ is a collection of vertices of $K_n \circ K_m$ with  $|S| \leq nm+n-m-2$. By Observation 1.2, all vertices in each copy of $K_m$ must be initially colored. That is, $nm$ initially colored vertices are from the $n$ copies of $K_m$.Therefore, according to our assumption, atmost $n-m-2$ vertices in $K_n$ are initially colored. That is, atleast $m+2$ vertices in $K_n$ are not yet colored and hence atleast 2 among them are not u.c.e vertices. Therefore, $C$ is not a Sudoku coloring. So, $sn(K_n \circ K_m) \geq nm+n-m-1$.\\
Let $C$ be an initial coloring of $K_n \circ K_m$ where $C(v_i)=i$ for $i=m+2,m+3, \dots n$. Color all vertices of $K_m$ corresponding to $v_i$ using colors other than $i,m+2,m+3, \dots n$. Then $C$ is a uniquely extendable coloring with $nm+n-m-1$ initially colored vertices. Hence $sn(K_n \circ K_m)=nm+n-m-1$.\\

Case II: $m \geq n-1$\\

If possible, assume that there exists an extendable coloring $C$ of $G[S]$ where $S$ is a collection of vertices of $K_n \circ K_m$ with $|S| \leq n(m-1)$. In every copy of $K_m$, atleast $m-1$ vertices must be initially colored, which itself gives all $n(m-1)$ initially colored vertices. Therefore, every vertices of $K_n$ has atleast 2 colors in their color list and hence, atleast 2 among them are not u.c.e vertices. So, $C$ is not a Sudoku coloring and hence, $sn(K_n \circ K_m) \geq n(m-1)+1$.\\
Let $C$ be the following initial coloring of $K_n \circ K_m$.\\
Color all vertices of the copy of $K_m$ corresponding to $v_1$ using colors $2,3, \dots, m+1$. (This forces the color of $v_1$ to be $1$). Color $m-1$ vertices of each copy of $K_m$ corresponding to $v_i$, $i=2,3, \dots, n$ using colors other than $1$ and $i$. (Note that when $1$ and $i$ are removed from the color list $\{1, 2, \dots, m+1\}$ exactly $m-1$ colors are available to color the $m-1$ vertices of $K_m$.) Then $C$ is a uniquely extendable coloring with $n(m-1)+1$ initially colored vertices. Hence $sn(K_n \circ K_m)=n(m-1)+1$.\\

\end{proof}

Figure~\ref{f:K2} gives the uniquely extendable coloring of $K_4 \circ K_2$ with 9 colored vertices and its unique extension.\\
\begin{figure}[h]
	\centering
	\includegraphics[width=15cm]{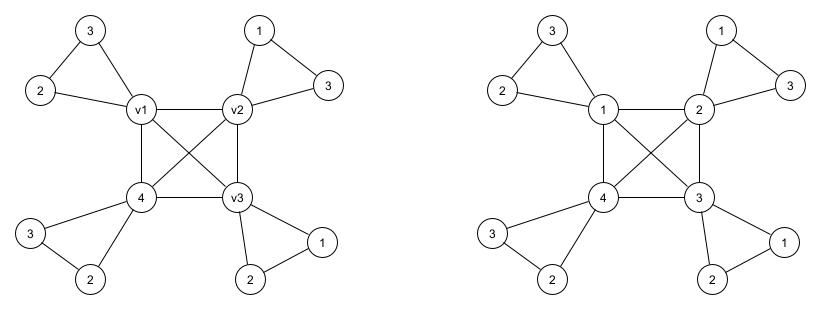}
	\caption{Uniquely extendable coloring of $K_4 \circ K_2$ and its final coloring}
	\label{f:K2}
\end{figure}\\

Figure~\ref{f:K3} gives the uniquely extendable coloring of $K_4 \circ K_3$ with 9 colored vertices and its unique extension.\\
\begin{figure}[h]
	\centering
	\includegraphics[width=15cm]{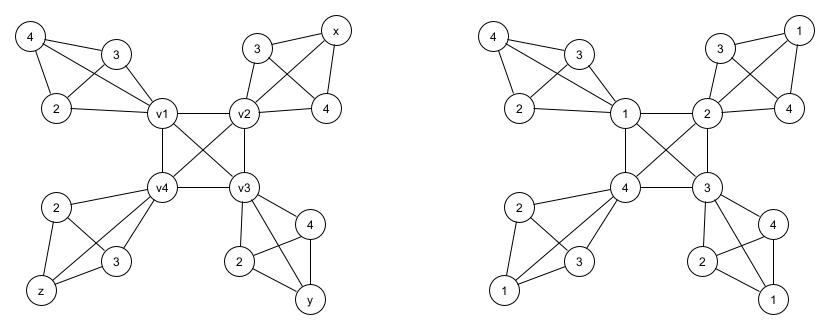}
	\caption{Uniquely extendable coloring of $K_4 \circ K_3$ and its final coloring}
	\label{f:K3}
\end{figure}
\begin{thm}
	For $n \geq 3$, $m \geq 2$, $sn(C_n \circ P_m)= n+1$.
\end{thm}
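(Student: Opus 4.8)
The plan is to set $k=\chi(C_n\circ P_m)$, show $k=3$, then prove $sn(C_n\circ P_m)\ge n+1$ and $sn(C_n\circ P_m)\le n+1$ separately; the key structural fact is that the $n$ copies of $P_m$ interact only through the apex cycle, and within one copy a proper $3$-colouring is determined by the apex colour together with a single ``phase'' bit. I would first fix notation: write $C_n=v_1v_2\dots v_nv_1$ and let the $i$-th copy of $P_m$ be $w_{i,1}w_{i,2}\dots w_{i,m}$ with $v_i$ adjacent to every $w_{i,j}$. Since $v_i,w_{i,1},w_{i,2}$ form a triangle, $\chi\ge 3$; conversely a proper $3$-colouring exists by $3$-colouring $C_n$ and $2$-colouring each path with the two colours missing at its apex, so $k=3$.

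For the lower bound the main tool is a colour-swap symmetry. If some copy $i$ contains no initially coloured vertex, then in any proper extension its vertices use exactly the two colours different from the colour of $v_i$ (here $m\ge 2$ guarantees both colours actually occur), and interchanging these two colours on copy $i$ alone yields a second, distinct proper extension. Hence in any Sudoku colouring every copy of $P_m$ contains at least one coloured vertex, so $|S|\ge n$. To reach $n+1$, suppose $|S|=n$; then each copy contains exactly one coloured vertex and no $v_i$ is coloured. Writing $a_i$ for the colour of the lone coloured vertex of copy $i$, the apex $v_i$ has colour list $\{1,2,3\}\setminus\{a_i\}$ of size $2$, and the apexes induce $C_n$. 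By Lemma 1.2 this cycle admits at least two list-colourings, each of which lifts to a full proper colouring by choosing in each path the phase that agrees with its single precoloured vertex; these differ on the apexes, so the extension is not unique. Therefore $sn(C_n\circ P_m)\ge n+1$.

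For the upper bound I would exhibit a forcing colouring of size $n+1$. Fix any proper $3$-colouring $c_1,\dots,c_n$ of $C_n$ (for instance $1,2,1,2,\dots$, ending in a $3$ when $n$ is odd), colour $v_1$ with $c_1$, and colour each endpoint $w_{i,1}$ with $a_i$, the unique colour of $\{1,2,3\}\setminus\{c_{i-1},c_i\}$ (indices mod $n$). This is a proper partial colouring on $n+1$ vertices. The extension is forced iteratively: $v_1$ is fixed, and assuming $v_{i-1}=c_{i-1}$, the apex $v_i$ sees the two distinct colours $c_{i-1}$ and $a_i$, so it is colour dominating and is forced to $c_i$ by condition $1)$ of Observation 1.1; this sweeps $v_2,\dots,v_n$ around the cycle and closes up consistently since $c_n\ne c_1$. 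Once every apex is coloured, copy $i$ is forced down the path: $w_{i,2}$ sees $w_{i,1}=a_i$ and $v_i=c_i$ and is forced to the third colour, then $w_{i,3}$, and so on. Every uncoloured vertex is thus iteratively a u.c.e.\ vertex, so by Observation 1.1 this is a Sudoku colouring, giving $sn(C_n\circ P_m)\le n+1$.

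Combining the two bounds yields the theorem. I expect the main obstacle to be the second step of the lower bound: arguing cleanly that the extremal configuration with exactly $n$ coloured vertices (one per path, none on the cycle) always retains a genuine cycle ambiguity, and in particular verifying that each apex-cycle list-colouring produced by Lemma 1.2 really does extend to a full proper colouring compatible with the single precoloured vertex of every path. The swap symmetry and the vertex-by-vertex forcing in the upper bound are routine once the notation is in place, and both arguments are uniform in $m$, which explains why the value $n+1$ does not depend on $m$.
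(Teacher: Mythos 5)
Your proposal is correct and follows essentially the same route as the paper: the lower bound via one forced vertex per copy of $P_m$ plus the size-$2$ lists on the apex cycle and Lemma 1.2, and the upper bound via an explicit $(n+1)$-vertex initial coloring whose extension is forced vertex by vertex. Your version is slightly more careful in one spot the paper glosses over, namely verifying that each of the two list-colorings of the apex cycle actually lifts to a full proper extension compatible with the lone precoloured vertex of each path.
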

\begin{proof}
	We know that the chromatic number of $ C_n \circ P_m $ is $3$. Let $ v_1  ,v_2 , \dots , v_n $ be the vertices of $ C_n $ and $ u_{i1} , u_{i2} , \dots , u_{im} $ be the vertices of $ P_m $ corresponding to the vertex $v_i$, $ 1 \leq i \leq n $.	Let $C$ be an initial coloring of $ C_n \circ P_m $. When $n$ is even, $ C(u_{12})=2 $ and $ C(u_{i1})=1 $ , $ 1 \leq i \leq n$. When $n$ is odd, $ C(u_{12})= C(u_{n1})= 2 $ and $ C(u_{i1})=1 $ , $ 1 \leq i \leq n-1$. This is a uniquely extendable coloring with $n+1$ initially colored vertices. Therefore, $sn(C_n \circ P_m) \leq n+1$.\\
	
	At least one vertex of each copy of $P_m$ should be initially colored. Otherwise, there exist at least one path $ P_m = u_{i1}u_{i2} \dots u_{im}$ with $ |L(u_{ij})|= 2$, $1 \leq j \leq m$, $ 1 \leq i \leq n$. Therefore, $ sn(C_n \circ P_m) \geq n$. Now, if $sn(C_n \circ P_m)=n$, that is, one vertex of each copy of $P_m$ is initially colored in $C$, then there exists $C_n = v_1v_2 \dots v_n$ with $|L(v_i)|=2$. Hence $C$ is not a Sudoku coloring by Lemma 1.2, a contradiction. Hence, $sn(C_n \circ P_m)=n+1$.
\end{proof}
Figure~\ref{f:C4} gives the uniquely extendable coloring of $C_4 \circ P_3$ with 5 initially colored vertices and its unique extension.\\

\begin{figure}[h]
	\centering
	\includegraphics[width=13cm]{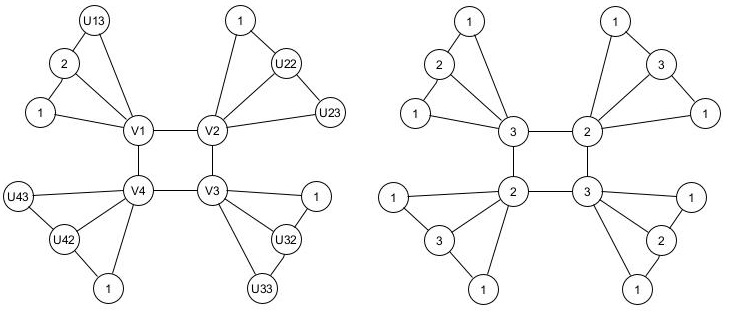}
	\caption{Uniquely extendable coloring of $C_4 \circ P_3$ and its final coloring}
	\label{f:C4}
\end{figure}
Figure~\ref{f:C1} gives the uniquely extendable coloring of $C_5 \circ P_3$ with 6 initially colored vertices and its unique extension.

\begin{figure}[h]
	\centering
	\includegraphics[width=13cm]{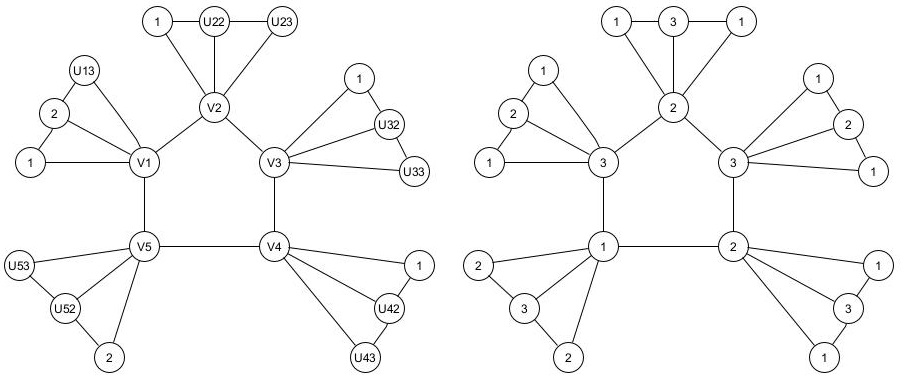}
	\caption{Uniquely extendable coloring of $C_5 \circ P_3$ and its final coloring}
	\label{f:C1}
\end{figure}

\end{document}